\newtheorem{theorem}{Theorem}[section]
\newtheorem{lemma}[theorem]{Lemma}
\newtheorem{proposition}[theorem]{Proposition}
\newtheorem{corollary}[theorem]{Corollary}
\numberwithin{equation}{section}
\begin{document}

\title{Schreier spectrum of the {H}anoi {T}owers group on three pegs}

\author{Rostislav Grigorchuk}

\email{grigorch@math.tamu.edu}

\author{Zoran \v Suni\'c}

\email{sunic@math.tamu.edu}

\address{Department of Mathematics, Texas A\&M University, MS-3368, College Station, TX 77843-3368,
USA}

\thanks{Both authors would like to announce the support by NSF grants
DMS-0600975 and DMS-0648786 and by Isaac Newton Institute for
Mathematical Sciences in Cambridge, UK. The first author was also
supported by NSF grant DMS-0456185}

\subjclass{Primary 37A30; Secondary 20E08, 05C50}
\date{}

\keywords{graph spectra, self-similar graphs, Hanoi Towers Game}

\begin{abstract}
Finite dimensional representations of the Hanoi Towers group are
used to calculate the spectra of the finite graphs associated to the
Hanoi Towers Game on three pegs (the group serves as a renorm group
for the game). These graphs are Schreier graphs of the action of the
Hanoi Towers group on the levels of the rooted ternary tree. The
spectrum of the limiting graph (Schreir graph of the action on the
boundary of the tree) is also provided.
\end{abstract}

\maketitle


\section{Introduction}

The spectral theory of graphs and groups has links to many areas of
mathematics. It allows for a combinatorial approach to the spectral
theory of Laplace-Beltrami operators on manifolds, but at the same
time it often gives completely new approach to some problems in
algebra, operator algebras, random walks, and combinatorics. It is
also closely related to many topics in fractal geometry.

The goal of this note is to show how algebra (more precisely, a
group structure) may be used to solve the spectral problem for the
sequence of graphs that are modifications of Sierpi\'{n}ski graphs
and which naturally arise in the study of the popular combinatorial
problem often called Hanoi Towers Game or Hanoi Towers Problem (for
a survey of topics and results related to Hanoi Towers Game
see~\cite{hinz:ens}.

It is indicated in~\cite{grigorchuk-s:hanoi-cr}
and~\cite{grigorchuk-s:standrews} that the Hanoi Towers Game on $k$
pegs, $k \geq 3$, can be modeled by a self-similar group, denoted
$H^{(k)}$, generated by a finite automaton on $k(k-1)/2+1$ states
over the alphabet $X=\{0,1\dots,k-1\}$ of cardinality $k$. The group
$H^{(k)}$ acts by automorphisms on the rooted $k$-regular tree and
one may consider the sequence of (finite) Schreier graphs
$\{\Gamma_n^{(k)}\}$ (graphs of the action, or orbit graphs) of the
action of $H^{(k)}$ on level $n$ of the $k$-ary tree as well as the
(infinite) graph $\Gamma^{(k)}$ corresponding to the action of
$H^{(k)}$ on the orbit of a point on boundary of the tree (the
choice of the point does not play a role).

The combinatorial problem known as Hanoi Towers Game (on $k$ pegs,
$k \geq 3$, and $n$ disks, $n \geq 0$) can be reformulated as the
problem of finding the distance (or at least a good asymptotic
estimate and/or an algorithm for finding the shortest path) between
particular vertices in the graph $\Gamma_n^{(k)}$ (namely between
the vertices $0^n$ and $1^n$ in the natural encoding of the vertices
at level $n$ in the $k$-ary tree by words of length $n$ in the
alphabet $X=\{0,1,..., k-1\}$). This problem is closely related
(although not equivalent) to the problem of finding the diameters of
the graphs $\Gamma_n^{(k)}$, $n \geq 0$.

Questions on distances and diameters are related to the spectral
analysis of the involved graphs. It is known that the spectrum of a
graph does not determines the graph completely (the famous question
of Mark Kac ``Can one hear ...'' has a negative answer in the
context of graphs as well). Nevertheless, the spectrum gives a lot
of information about the graph. An upper bound on the diameter can
be obtained in terms of the second largest eigenvalue
(see~\cite{chung:b-spectral} for results and references in this
direction).

Therefore, one approach to Hanoi Towers Problem is to first try, for
fixed $k$, to solve the spectral problem for the sequence of graphs
$\{\Gamma_n^{(k)}\}$ and then use the obtained information to
compute or provide asymptotics for the diameters and distances
between $0^n$ and $1^n$ (note that Szegedy has already established
such asymptotics by using more direct methods~\cite{szegedy:hanoi}).
For $k=3$, the Hanoi Towers Problem has been solved in many ways
(both recursive and iterative). Namely, the diameter and the
distance between $0^n$ and $1^n$ in $\Gamma_n^{(3)}$ coincide and
are equal to $2^n-1$. On the other hand, the problem is open for $k
\geq 4$. It has long been conjectured (and ``proven'' many times)
that Frame-Stewart algorithm~\cite{frame:solution,stewart:solution}
does provide an optimal solution. This algorithm takes roughly
$2^{n^{\frac{1}{k-2}}}$ steps and all that can be safely concluded
after the work of Szegedy~\cite{szegedy:hanoi} is that this
algorithm is asymptotically optimal for $k \geq 4$.

True to form, we only have a full solution to the spectral problem
in case $k=3$ (announced in~\cite{grigorchuk-s:hanoi-cr}). Some
hints on the method used to calculate the spectrum are given
in~\cite{grigorchuk-s:standrews} as well as
in~\cite{grigorchuk-n:schur}. Here we give the complete proof and
use the opportunity to point out some intricacies of the approach.

In a sense, our approach is algebraic and is based on the fact that
Hanoi Towers group $H^{(k)}$ serves as the renorm group for the
model. We would like to stress that $H^{(k)}$ is not the group of
symmetries of the obtained Schreier graph(s), but rather the renorm
group of the model (the renorm aspect of self-similar groups is
emphasized in the review~\cite{grigorchuk:review-ss}).

Indeed many regular graphs can be realized as Schreier graphs of
some group. The symmetries that are described by the group are not
the symmetries of the system but nevertheless may be used to solve
the spectral problem. This viewpoint was initiated
in~\cite{bartholdi-g:spectrum} and further developed
in~\cite{grigorchuk-z:l2} and~\cite{kambites-s-s:spectra}. An
interesting phenomenon is the appearance of the Schur complement
transformation~\cite{grigorchuk-s-s:z2i,grigorchuk-n:schur}. One of
the main features of the method is the use of operator recursions
coming from the self-similarity structure, followed by algebraic
manipulations (sometimes employing $C^*$-algebra techniques).

We now describe our main result.

Let $\Gamma_n$ be the Schreier graph of the Hanoi Towers group
$H^{(3)}$ on three pegs ($\Gamma_3$ is depicted
in~Figure~\ref{f:g3}; see the next section for a precise
definition). Further, Let $\Gamma$ be the orbital Schreier graph of
$H^{(3)}$ corresponding to the orbit of the infinite word $000\dots$
(this graph is a limit of the sequence of graphs $\{\Gamma_n\}$; see
the next section for a precise definition).
\begin{figure}[!ht]
\begin{center}
\includegraphics[width=280pt]{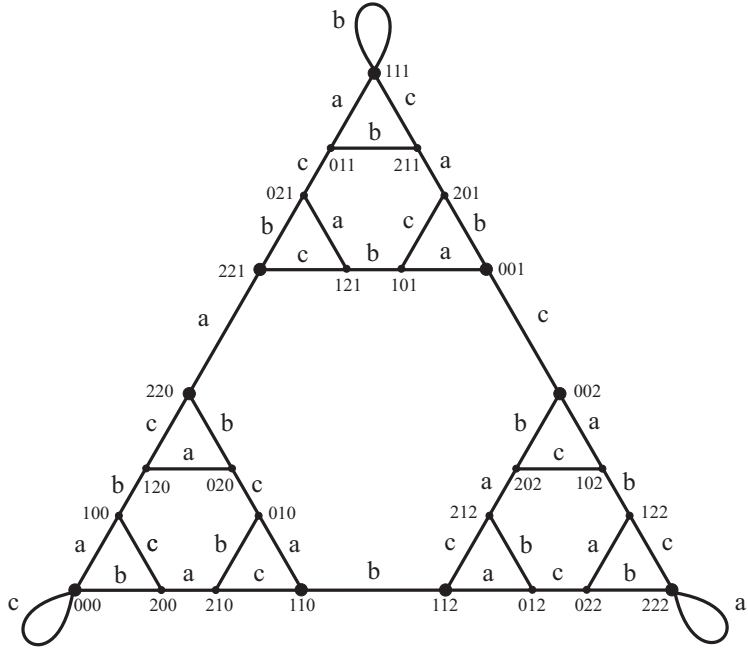}
\end{center}
 \caption{$\Gamma_3$, the Shcreier graph of $H^{(3)}$ at level 3}
 \label{f:g3}
\end{figure}

\begin{theorem}\label{t:main}
For $n \geq 1$, the spectrum of the graph $\Gamma_n$, as a set, has
$3 \cdot 2^{n-1}-1$ elements and is equal to
\[
 \{3\} \ \cup \ \bigcup_{i=0}^{n-1} f^{-i}(0) \ \cup \
                \bigcup_{j=0}^{n-2} f^{-j}(-2),
\]
where
\[ f(x) = x^2 - x - 3. \]

The multiplicity of the $2^i$ eigenvalues in $f^{-i}(0)$,
$i=0,\dots,n-1$ is $a_{n-i}$ and the multiplicity of the $2^j$
eigenvalues in $f^{-j}(-2)$, $j=0,\dots,n-2$ is $b_{n-j}$, where,
for $m \geq 1$,
\[ a_m = \frac{3^{m-1}+3}{2}, \qquad\qquad b_m = \frac{3^{m-1}-1}{2}. \]

The spectrum of $\Gamma$ (the Shcreier spectrum of $H^{(3)}$), as a
set, is equal to
\[
 \overline{\bigcup_{i=0}^\infty f^{-i}(0)} .
\]

It consists of a set of isolated points $I= \bigcup_{i=0}^\infty
f^{-i}(0)$ and its set of accumulation points $J$. The set $J$ is a
Cantor set and is the Julia set of the polynomial $f$.

The KNS spectral measure is discrete and concentrated on the the set
\[ \bigcup_{i=0}^\infty f^{-i}\{0,-2\}. \]
The KNS measure of each eigenvalue in $f^{-i}\{0,-2\}$ is
$\frac{1}{6 \cdot 3^i}$, $i=0,1,\dots$~.
\end{theorem}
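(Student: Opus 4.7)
The strategy is to exploit the self-similarity of $H^{(3)}$ to produce a Schur-complement renormalization of the spectral problem with renormalization map $f(x) = x^2 - x - 3$, then iterate and pass to the limit.

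First I would set up the operator recursion. Writing $A_n = a_n + b_n + c_n$ for the three involutive generators (each swapping a pair of pegs), and decomposing $\ell^2(X^n)$ according to the first letter of $X = \{0,1,2\}$, the self-similar wreath recursion gives each generator a $3 \times 3$ block form whose diagonal blocks are either the identity or the same generator one level down, and whose single off-diagonal transposition block couples the two identity blocks. To close the recursion at the level of spectra, I would embed $A_n - x I$ in a two-parameter pencil $Q_n(x,y)$ obtained by perturbing the diagonal in a way dictated by the block shape. Performing the Schur complement to eliminate the off-diagonal transposition rows produces (up to a nonzero scalar factor) $Q_{n-1}(F(x,y), G(x,y))$ for explicit rational maps $F, G$. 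Restricting to the physical line where $Q_n = A_n - xI$ and eliminating $y$ yields the one-variable renormalization $f$: $\lambda$ is an eigenvalue of $A_n$ iff $f(\lambda)$ is an eigenvalue of $A_{n-1}$, or $\lambda$ makes the Schur-invertibility factor vanish.

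Second I would unwind. Direct computation at $n = 1$ shows $A_1$ is the all-ones $3 \times 3$ matrix with spectrum $\{3, 0, 0\}$, so $3$ is the Perron seed and $0$ is the first nontrivial seed; the Schur-invertibility factor at levels $n \geq 2$ contributes the additional seed $-2$. Iterating the equivalence above gives
\[ \operatorname{Spec}(\Gamma_n) = \{3\} \cup \bigcup_{i=0}^{n-1} f^{-i}(0) \cup \bigcup_{j=0}^{n-2} f^{-j}(-2), \]
whose cardinality is $1 + (2^n - 1) + (2^{n-1} - 1) = 3 \cdot 2^{n-1} - 1$ because each $f$-preimage of a generic point has two distinct elements. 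The multiplicity sequences $a_m, b_m$ arise by induction on $m$: each pullback doubles the count of eigenvalues while the block triplication multiplies the eigenspace dimension by three up to a controlled correction from the seed. Solving the resulting linear recurrences gives $a_m = (3^{m-1}+3)/2$ and $b_m = (3^{m-1}-1)/2$, and the total dimension $1 + \sum_{i=0}^{n-1} 2^i a_{n-i} + \sum_{j=0}^{n-2} 2^j b_{n-j} = 3^n$ provides a consistency check.

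Finally I would pass to the limit. The set $I = \bigcup_i f^{-i}(0)$ is the backward orbit of the non-exceptional point $0$ under the quadratic $f$, whose critical point escapes to infinity, so its closure equals $I \cup J$ where $J$ is the Cantor Julia set of $f$. Norm-continuity of the resolvent together with $\operatorname{Spec}(\Gamma) \subseteq \overline{\bigcup_n \operatorname{Spec}(\Gamma_n)}$, and the observation that $\bigcup_j f^{-j}(-2)$ accumulates on the same Julia set, yield $\operatorname{Spec}(\Gamma) = I \cup J$. The KNS measure is the weak-$*$ limit of the normalized counting measures $\frac{1}{3^n}\sum_{\lambda} m_n(\lambda)\delta_\lambda$; passing to the limit in the explicit multiplicity formulas of the previous paragraph identifies it as the discrete measure with mass $\frac{1}{6 \cdot 3^i}$ on each point of $f^{-i}\{0, -2\}$ (a short computation shows the total mass is $1$). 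The main obstacle is the first step: inventing the right pencil $Q_n(x,y)$ so that the Schur complement closes and collapses, after eliminating $y$, to a single polynomial recursion in $x$; naive perturbations give genuinely two-variable recursions that do not separate. Once that algebraic identity is in hand, the remaining combinatorial bookkeeping, the Julia-set identification, and the weak-$*$ limit are routine.
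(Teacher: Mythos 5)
Your overall strategy (two-parameter pencil, Schur complement with respect to the first-level decomposition, renormalization by the quadratic $f(x)=x^2-x-3$) is the same as the paper's, but the step where you claim the recursion ``collapses, after eliminating $y$, to a single polynomial recursion in $x$'' is stated in a form that is actually false, and this is precisely the crux of the argument. You assert that $\lambda$ is an eigenvalue of $\Delta_n$ iff $f(\lambda)$ is an eigenvalue of $\Delta_{n-1}$ or $\lambda$ kills the Schur-invertibility factor. The Schur complement of $\Delta_n(x,1)$ is \emph{not} of the form $\Delta_{n-1}(x'',1)$: the off-diagonal blocks come out equal to $y''\cdot 1$ with $y''=x/((x-2)(x^2-1))\neq 1$, so the two-dimensional map $F$ does not preserve the physical line $y=1$ and there is no level-to-level spectral correspondence governed by $f$ alone. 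Concretely, your characterization fails already at $n=2$: the invertibility factor at $y=1$ is $(x-2)(x+2)x^2(\cdots)$, so your recipe would place $2$ in the spectrum of $\Gamma_2$, whereas $2$ is not an eigenvalue (its forward $f$-orbit is $2,-1,-1,\dots$ and never reaches $0$, $-2$ or $3$); the zeros of the prefactor can be cancelled by poles of $D_{n-1}(F(x,1))$, so ``prefactor vanishes'' does not imply ``determinant vanishes.''

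The missing idea is the semiconjugacy that the paper singles out as the crucial observation: with $\Psi(x,y)=(x^2-1-xy-2y^2)/y$ one has $\Psi(F(x,y))=f(\Psi(x,y))$, and consequently the two-variable determinant $D_n(x,y)$ factors over the whole plane into the quadratics $\Psi_\theta=y(\Psi-\theta)$ with $\theta$ ranging over backward $f$-orbits of $0$ and $-2$, with explicitly computable exponents $a_m$, $b_m$. Only after this global polynomial factorization is established does one set $y=1$, where $\Psi_\theta(x,1)=f(x)-\theta$, and read off both the eigenvalues and their multiplicities; the multiplicities are exponents in the factorization, not the output of a separate ``triplication'' recurrence as you suggest. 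Two smaller gaps: the Schur determinant formula is only valid where $M_{11}$ is invertible and is being iterated, so one must argue (as the paper does) that the final identity is an identity of polynomials valid on a dense open set and hence everywhere; and for the infinite graph you only invoke the inclusion $\operatorname{Spec}(\Gamma)\subseteq\overline{\bigcup_n\operatorname{Spec}(\Gamma_n)}$, whereas the reverse inclusion, which is what makes the union exhaust the spectrum, requires the amenability of $\Gamma$ (or of $H$).
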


Note that here and thereafter, for a map $\varphi$ and positive
integer $i$, we denote by $\varphi^{i}$ the $i$-th iterate $\varphi
\circ \cdots \circ \varphi$ of $\varphi$ under composition. For a
set $A$, $\varphi^{-i}(A)$ denotes the set of inverse images of $A$
under $\varphi^i$.

Before we move into more details, let us mention that, given the
relative simplicity of the graphs $\Gamma_n$, one may likely obtain
the same result by using some other techniques (in particular those
developed by Teplyaev and
Malozemov~\cite{teplyaev:gasket,malozemov-t:self-similarity}), but
the use of self-similar groups brings new ideas to spectral theory
that, we believe, could be successfully utilized in many other
situations.

As an added benefit, the Hanoi Tower groups $H^{(k)}$, $k \geq 3$,
are extremely interesting algebraic objects which have some unusual
properties (from the point of view of classical group theory). For
instance, $H^{(3)}$ is a finitely generated branch
group~\cite{grigorchuk-s:standrews} that has the infinite dihedral
group as a quotient (it is the first known example of a branch group
generated by a finite automaton that has this property) and is
therefore not a just infinite group (an example of a branch group
generated by a finite automaton that has the infinite cyclic group
as a quotient was given in~\cite{grigorchuk-d:fa}). Further,
$H^{(3)}$ is amenable but not elementary, and not even
subexponentially, amenable group (no familiarity with the
amenability property is needed to follow the content of this
manuscript; interested reader is referred
to~\cite{greenleaf:b-means} and~\cite{ceccherini-g-h:paradoxical}
for more background on amenability). The closure of $H^{(3)}$ in the
group of ternary tree automorphisms $\operatorname{Aut}(X^*)$ is a
finitely constrained group (a group of tree automorphisms defined by
a finite set of forbidden patterns). Finally, we mention that
$H^{(3)}$ is (isomorphic to) the iterated monodromy group of the
post-critically finite rational map $z \mapsto z^2 - \frac{16}{27z}$
(this map has already been studied from the point of view of complex
dynamics~\cite{devaney-l:16/27}). Some of the results listed above
were announced in~\cite{grigorchuk-n-s:oberwolfach1}
and~\cite{grigorchuk-n-s:oberwolfach2} and, along with other new
results, will be the subject of a more thorough treatment in a
subsequent paper~\cite{grigorchuk-n-s:hanoi}, written jointly with
Nekrashevych.

\section{Group theoretic model of Hanoi Tower Game and associated Schreier graphs}

We first describe briefly the Hanoi Towers Game (see~\cite{hinz:ens}
for more thorough description and historical details) and then
provide a model employing a group action on a rooted $k$-ary tree.

Fix an integer $k$, $k \geq 3$. The Hanoi Towers Game is played on
$k$ pegs, labeled by $0,1,\dots,k-1$, with $n$ disks, labeled by
$1,2,\dots,n$. All $n$ disks have different size and the disk labels
reflect the relative size of each disk (the disk labeled by 1 is the
smallest disk, the disk labeled by 2 is the next smallest, etc). A
configuration is (by definition) any placement of the $n$ disks on
the $k$ pegs in such a way that no disk is below a larger disk
(i.e., the size of the disks placed on any single peg is decreasing
from the bottom to the top of the peg). In a single step one may
move the top disk from one peg to another peg as long as the newly
obtained placement of disks is a configuration. Note that this
implies that, given two pegs $x$ and $y$, there is only one possible
move that involves these two pegs (namely the smaller of the two
disks on top of the pegs $x$ and $y$ may be moved to the other peg).
Initially all disks are on peg 0 and the object of the game is to
move all of them to peg 1 in the smallest possible number of steps.

Fix a $k$-letter alphabet $X=\{0,1,\dots,k-1\}$. The set $X^*$ of
(finite) words over $X$ has a rooted $k$-ary tree structure. The
empty word is the root, and the children of the vertex $u$ are the
$k$ vertices $ux$, for $x \in X$. The set $X^n$ of words of length
$n$ constitutes level $n$ in the tree $X^*$.

The $k$-ary rooted tree $X^*$ has self-similar structure. Namely,
for every vertex $u$, the tree $X^*$ is canonically isomorphic to
the subtree $uX^*$ hanging below vertex $u$ consisting of vertices
with prefix $u$. The canonical isomorphism $\phi_u:X^* \to uX^*$ is
given by $\phi_u(w) = uw$, for $w$ in $X^*$.

An automorphism $g$ of the $k$-ary tree $X^*$ induces a permutation
$\pi_g$ of the alphabet $X$, by setting $\pi_g(x)=g(x)$, for $x \in
X$. The permutation $\pi_g \in \operatorname{Sym}(X)$ is called the
root permutation of $g$. For a letter $x$ in $X^*$, the section of
$g$ at $x$ is the automorphism $g_x$ of $X^*$ defined by
\[ g_x(w) = \phi^{-1}_{g(x)}g\phi_x(w), \]
for $w$ in $X^*$. In other words, the section $g_x$ of $g$ at $x$
acts on the word $w$ exactly as $g$ acts on the tail (behind $x$) of
the word $xw$ and we have
\[ g(xw) = \pi_g(x) g_x(w), \]
for $w$ in $X^*$. Thus every tree automorphism $g$ of $X^*$ can be
decomposed as
\[
g = \pi_g \ (g_0,g_1,\dots,g_{k-1}),
\]
where $g_x$, $x=0,\dots,k-1$ are the sections of $g$ and $\pi_g$ is
the root permutation of $X$. Algebraically, this amounts to
decomposition of the automorphism group $\operatorname{Aut}(X^*)$ of
the tree $X^*$ as the semidirect product $\operatorname{Sym}(X)
\ltimes (\operatorname{Aut}(X^*) \times \cdots \times
\operatorname{Aut}(X^*))$. The direct product
$(\operatorname{Aut}(X^*) \times \cdots \times
\operatorname{Aut}(X^*))$ ($k$ copies) is the (pointwise) stabilizer
of the first level of $X^*$. Each factor in the first level
stabilizer $\operatorname{Aut}(X^*) \times \cdots \times
\operatorname{Aut}(X^*)$ acts on the corresponding subtree of $X^*$
hanging below the root (the factor corresponding to $x$ acts on
$xX^*$) and the symmetric group $\operatorname{Sym}(X)$ acts by
permuting these $k$ trees.

For any permutation $\pi$ in $\operatorname{Sym}(X)$ define a
$k$-ary tree automorphism $a=a_\pi$ by
\[
 a = \pi (a_0,a_1,\dots,a_{k-1}),
\]
where $a_i=a$ if $i$ is fixed by $\pi$ and and $a_i=1$ otherwise.

For instance, for the transposition $(ij)$, the action of $a_{(ij)}$
on $X^*$ is given recursively by $a_{ij}(\emptyset)=\emptyset$ and,
for $x \in X$ and $w \in X^*$,
\[
 a_{ij}(xw) =
  \begin{cases}
   jw, & x=i \\
   iw, & x=j \\
   xa_{ij}(w), & x \neq i, \ x \neq j.
  \end{cases}
\]
Thus, $a_{(ij)}$ ``looks'' for the first (leftmost) occurrence of
one of the letters in $\{i,j\}$ and replaces it with the other
letter. If none of the letters $i$ or $j$ appears, $a_{(ij)}$ leaves
the word unchanged.

The configurations in Hanoi Towers Game on $k$ pegs and $n$ disks
are in bijective correspondence with the words of length $n$ over
$X$ (vertices at level $n$ in the $k$-ary tree). The word
$x_1x_2\dots x_n$ represents the unique configuration in which disk
$i$ is on peg $x_i$ (once the location of each disk is known, there
is only one way to order them on their respective pegs).

The automorphism $a_{(ij)}$ of $X^*$ acts on the set of
configurations by applying a move between peg $i$ and peg $j$. To
apply a move between these two pegs one has to move the smallest
disk that is on top of either of these two pegs to the other peg.
Let $w=x_1x_2 \dots x_n$ be a word over $X$ and let the leftmost
occurrence of one of the letters $i$ or $j$ in $w$ happen at
position $m$. This means that none of the disks $1,2,\dots,m-1$ is
on peg $i$ or peg $j$ and disk $m$ is the smallest disk that appears
on either peg $i$ or $j$. Further, disk $m$ appears on peg $x_m
\in\{i,j\}$ and needs to be moved to peg $\overline{x}_m$, where
$\overline{x}_m=j$ if $x_m=i$ and $\overline{x}_m=i$ if $x_m=j$. The
word $a_{(ij)}(w)$ is obtained from $w$ by changing the leftmost
occurrence of one of the letters $i$ or $j$ to the other letter,
i.e., by changing the letter $x_m$ to $\overline{x}_m$, which
exactly corresponds to the movement of disk $m$ from peg $x_m$ to
peg $\overline{x}_m$. Note that if both peg $i$ and peg $j$ are
empty there is no occurrence of either $i$ or $j$ in $w$ and in such
a case $a_{(ij)}(w)=w$, i.e., the configuration is not changed after
a move between peg $i$ and peg $j$ is applied.

Hanoi Towers group on $k$ pegs is the group
\[
 H^{(k)} = \langle \{ a_{(ij)} \mid 0\leq i < j \leq k-1 \} \rangle
\]
of $k$-ary tree automorphisms generated by the automorphisms
$a_{(ij)}$, $0\leq i < j \leq k-1$, corresponding to the
transpositions in $\operatorname{Sym}(X)$. We often drop the
superscript in $H^{(k)}$ in case $k=3$, i.e., we set $H = H^{(3)}$.
Therefore, if we denote $a_{(01)} = a$, $a_{(02)} = b$, and
$a_{(12)} = c$, the Hanoi Towers group on 3 pegs is the group
\[ H = \langle a, b, c \rangle \]
generated by the ternary tree automorphisms $a$, $b$ and $c$,
defined by
\begin{align*}
 a &= (01) \ (1, 1, a) \\
 b &= (02) \ (1, b, 1) \\
 c &= (12) \ (c, 1, 1).
\end{align*}

The Schreier graph of the action of $H^{(k)}$ on $X^*$ on level $n$
is the regular graph $\Gamma_n^{(k)}$ of degree $k(k-1)/2$ whose
vertices are the words of length $n$ over $X$, and in which every
pair of vertices $u$ and $v$ for which $a_{(ij)}(u)=v$ (or,
equivalently, $a_{(ij)}(v)=u$) is connected by an edge labeled by
$a_{(ij)}$. In other words, two configurations in the Hanoi Towers
Game on $k$ pegs and $n$ disks are connected by an edge labeled by
$a_{(ij)}$ exactly when they can be obtained from each other by
applying a move between peg $i$ and peg $j$. We often drop the
superscript in $\Gamma_n^{(k)}$ in case $k=3$, i.e., we set
$\Gamma_n = \Gamma_n^{(3)}$. For instance, the graph $\Gamma_3$ is
given in Figure~\ref{f:g3}. Graphs closely related to
$\Gamma_n^{(k)}$, usually called Hanoi Towers graphs, are standard
feature in many works on Hanoi Tower Game, with a small (but
ultimately important) difference. Namely, in our setting, if a word
$w$ does not contain any letter $i$ or letter $j$, there is a loop
at $w$ in $\Gamma_n^{(k)}$ labeled by $a_{(ij)}$. The graph
corresponding to Hanoi Towers Game on 3 pegs and 3 disks that
appears in~\cite{hinz:ens} (and many other references) does not have
loops at the three ``corners''. Note that our loops do not change
the distances (or the diameter) in the graphs, but provide
additional level of regularity that is essentially used in our
considerations.

The boundary $\partial X^*$ of the tree $X^*$ is the set of all
right infinite words over $X$. Since automorphisms of the tree $X^*$
preserve the prefixes of words (if two words share a common prefix
of length $m$, so do their images), the action of any group of tree
automorphisms can be naturally extended to an action on the boundary
of the tree. The orbit of the word $0^\infty =000\dots$ in $\partial
X^*$ under the action of $H^{(k)}$ consists of all right infinite
words that end in $0^\infty$. The orbital Schreier graph
$\Gamma^{(k)}=\Gamma_{0^\infty}^{(k)}$ is the countable graph whose
vertices are the words in the orbit $H^{(k)}(0^\infty)$, and in
which a pair of infinite words $u0^\infty$ and $v0^\infty$, with
$|u|=|v|$, is connected by an edge labeled by $a_{(ij)}$ exactly
when $a_{(ij)}(u)=v$ (or, equivalently, $a_{(ij)}(v)=u$).  Note
that, in terms of the Hanoi Towers Game on $k$ pegs, the word
$0^\infty$ corresponds to the infinite configuration in which
countable many disks labeled by $1,2,3,\dots$ are placed on peg 0,
the vertices in $\Gamma^{(k)}$ are the infinite configurations that
can be reached from $0^\infty$ by using finitely many legal moves
and two infinite configurations are connected by an edge labeled by
$a_{(ij)}$ if one can be obtained from the other by applying a move
between peg $i$ and peg $j$. When $k=3$ we often omit the
superscript in $\Gamma^{(3)}$ and set $\Gamma=\Gamma^{(3)}$.

\section{Schreier spectrum of $H$}

The action of $H$ on level $n$ induces a permutational
$3^n$-dimensional representation $\rho_n: H \to GL(3^n,{\mathbb C})$
of $H$. Denote $\rho_n(a)=a_n$, $\rho_n(b)=b_n$ and $\rho_n(c)=c_n$.
The representation $\rho_n$ can be recursively defined by
\begin{gather}
 a_0=b_0=c_0=[1] \label{e:rec-generators} \\
 a_{n+1} =
 \begin{bmatrix}
  0_n & 1_n & 0_n \\
  1_n & 0_n & 0_n \\
  0_n & 0_n & a_n
 \end{bmatrix} \qquad
 b_{n+1} =
 \begin{bmatrix}
  0_n & 0_n & 1_n \\
  0_n & b_n & 0_n \\
  1_n & 0_n & 0_n
 \end{bmatrix} \qquad
 c_{n+1} =
 \begin{bmatrix}
  c_n & 0_n & 0_n \\
  0_n & 0_n & 1_n \\
  0_n & 1_n & 0_n
 \end{bmatrix}, \notag
 \end{gather}
where $0_n$ and $1_n$ are the zero and the identity matrix,
respectively, of size $3^n \times 3^n$.

The matrix $\Delta_n=a_n+b_n+c_n$ is the adjacency matrix of the
Schreier graph $\Gamma_n$ and is defined by
\[
 \Delta_0=[3], \qquad\qquad
 \Delta_{n+1} =
 \begin{bmatrix}
  c_n & 1_n & 1_n \\
  1_n & b_n & 1_n \\
  1_n & 1_n & a_n
 \end{bmatrix}.
\]
The spectrum of $\Gamma_n$ is the set of $x$ values for which the
matrix
\[ \Delta_n(x) = a_n+b_n+c_n-x \]
is not invertible. We introduce another real parameter (besides $x$)
and additional operator $d_n$ defined by
\[ d_{n+1} = \begin{bmatrix}
  0_n & 1_n & 1_n \\
  1_n & 0_n & 1_n \\
  1_n & 1_n & 0_n
 \end{bmatrix},
\]
and, for $n \geq 1$, consider the 2-parameter pencil $\Delta_n(x,y)$
of $3^n \times 3^n$ matrices given by
\[ \Delta_n(x,y) = a_n+b_n+c_n-x +(y-1)d_n, \]
i.e.,
\begin{equation}\label{e:delta-xy}
 \Delta_n(x,y) = \begin{bmatrix}
  c-x & y & y \\
  y & b-x & y \\
  y & y & a-x
 \end{bmatrix}
\end{equation}
(observe that here and thereafter we drop the index from $a_n$,
$b_n$, etc., in order to keep the notation less cumbersome). Instead
of trying to determine the values of $x$ for which
$\Delta_n(x)=\Delta_n(x,1)$ is not invertible we will find all pairs
$(x,y)$ for which $\Delta_n(x,y)$ is not invertible (call this set
of points in the plane the auxiliary spectrum). This seemingly
unmotivated excursion to a higher dimension actually comes naturally
(see the comments after Proposition~\ref{p:recursion}).

For $n \geq 1$, let
\[ D_n(x,y) = \det(\Delta_n(x,y)).\]
We provide a recursive formula for the determinant $D_n(x,y)$.

\begin{proposition}\label{p:recursion}
We have
\[
 D_1(x,y) = -(x-1-2y)(x-1+y)^2
\]
and, for $n \geq 2$,
\begin{equation}\label{e:recursion}
 D_n(x,y) = (x^2 - (1+y)^2)^{3^{n-2}} (x^2-1+y-y^2)^{2 \cdot 3^{n-2}} D_{n-1}(F(x,y)),
\end{equation}
where $F: {\mathbb R}^2 \to {\mathbb R}^2$ is the 2-dimensional
rational map given by
\[ F(x,y) = (x',y'), \]
and the coordinates $x'$ and $y'$ are given by
\[
 x' =
 x+ \frac{2y^2(-x^2+x+y^2)}{(x-1-y)(x^2-1+y-y^2)}
\]
and
\[
 y' = \frac{y^2(x-1+y)}{(x-1-y)(x^2-1+y-y^2)}.
\]
\end{proposition}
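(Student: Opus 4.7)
The plan is to prove the recursion by a block Schur complement applied to $\Delta_n(x,y)$, after exposing two levels of the self-similar structure. The base case $n=1$ is a direct $3\times 3$ determinant computation from~\eqref{e:delta-xy}.

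For $n\geq 2$, I use the recursions defining $a_{n-1}$, $b_{n-1}$, $c_{n-1}$ to expand $\Delta_n(x,y)$ as a $9\times 9$ block matrix with blocks of size $3^{n-2}\times 3^{n-2}$, indexed by the first two letters $(i,j)$ of the underlying length-$n$ words. The nine diagonal positions split into three \emph{recursive} blocks at $(i,i)$, whose diagonal entries are $c_{n-2}-x$, $b_{n-2}-x$, $a_{n-2}-x$, and six \emph{simple} blocks at $(i,j)$ with $i\neq j$, whose diagonal entries are $-x\cdot 1_{n-2}$. Direct inspection of the off-diagonal entries exhibits three clean features: the three recursive blocks are mutually disconnected; each simple block is joined to exactly one recursive block by $y\cdot 1_{n-2}$; and the six simple blocks form a $6$-cycle whose edge weights alternate between $1_{n-2}$ (inner swaps inherited from the peg swaps inside $a_{n-1}$, $b_{n-1}$, $c_{n-1}$) and $y\cdot 1_{n-2}$ (outer connections between different first-level blocks). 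After reordering rows and columns, $\Delta_n(x,y)$ takes the block form
\[
\begin{bmatrix} S & E \\ E^{\mathrm T} & D \end{bmatrix}
\]
with $D=\operatorname{diag}(c_{n-2}-x,\,b_{n-2}-x,\,a_{n-2}-x)$, $S=s\otimes 1_{n-2}$, and $E=e\otimes 1_{n-2}$, where $s$ is the $6\times 6$ scalar matrix of the weighted alternating $6$-cycle with diagonal $-x$ and $e$ is the $6\times 3$ simple-to-recursive incidence matrix (scaled by $y$).

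The Schur complement of $S$ yields
\[
D_n(x,y)=\det(s)^{3^{n-2}}\cdot\det\bigl(D-(e^{\mathrm T}s^{-1}e)\otimes 1_{n-2}\bigr).
\]
A direct expansion of $\det(s)$ (the only permutations contributing are the identity, matchings in the $6$-cycle, and the two $6$-cycle rotations) gives $\det(s)=(x^2-(1+y)^2)(x^2-1+y-y^2)^2$, which, raised to the $3^{n-2}$-th power, is exactly the prefactor in~\eqref{e:recursion}. For the second factor, the $S_3$-symmetry coming from permutation of the three pegs forces $e^{\mathrm T}s^{-1}e$ to commute with all permutation matrices on its $3\times 3$ index set, hence to be of the form $p\,I+q(J-I)$ with $J$ the all-ones matrix. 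Therefore $D-(e^{\mathrm T}s^{-1}e)\otimes 1_{n-2}$ is precisely $\Delta_{n-1}(x+p,\,-q)$, and the recursion reduces to proving $x'=x+p$ and $y'=-q$.

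Extracting $p$ and $q$ reduces to computing one diagonal and one off-diagonal entry of $e^{\mathrm T}s^{-1}e$, equivalently to solving two small linear systems against $s$. The factored form of $\det(s)$ already signals that the denominators appearing will be $(x-1-y)$ and $(x^2-1+y-y^2)$, in agreement with the formulas for $F(x,y)$. The main obstacle is this final algebraic verification: inverting $s$ (or extracting just the two needed entries of $e^{\mathrm T}s^{-1}e$) and checking that $x+p$ and $-q$ match the claimed rational functions; the $S_3$-reduction and the explicit factorization of $\det(s)$ keep this otherwise tedious calculation manageable. Finally, the Schur complement step itself needs only be justified on the generic open set where $s$ is invertible, and the polynomial identity for $D_n$ then extends by continuity.
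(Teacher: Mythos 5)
Your proposal is correct and follows essentially the same route as the paper: expand $\Delta_n(x,y)$ one more level into a $9\times 9$ block matrix with blocks of size $3^{n-2}\times 3^{n-2}$, reorder so that the three recursive diagonal blocks come last, and take the Schur complement of the scalar (non-recursive) part, whose determinant yields the prefactor $(x^2-(1+y)^2)^{3^{n-2}}(x^2-1+y-y^2)^{2\cdot 3^{n-2}}$ and whose complement is $\Delta_{n-1}(F(x,y))$. Your description of $s$ as an alternating weighted $6$-cycle, the tensor-product bookkeeping, and the $S_3$-symmetry reduction of $e^{\mathrm T}s^{-1}e$ to the form $pI+q(J-I)$ are helpful refinements that organize the final rational-function verification (which the paper likewise only asserts), but they do not constitute a different argument.
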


\begin{proof}
By expanding the block matrix~\eqref{e:delta-xy} for $\Delta_n(x,y)$
one more level (using the recursions for the generators $a$, $b$ and
$c$ provided in~\eqref{e:rec-generators}) we obtain
\begin{equation}
 \Delta_n(x,y) = \left[ \begin{array}{ccc|ccc|ccc}
 c - x & 0 & 0 & y & 0 & 0 & y & 0 & 0 \\
 0 & -x & 1 & 0 & y & 0 & 0 & y & 0 \\
 0 & 1 & -x & 0 & 0 & y & 0 & 0 & y \\
 \hline
 y & 0 & 0 & -x & 0 & 1 & y & 0 & 0 \\
 0 & y & 0 & 0 & b-x & 0 & 0 & y & 0 \\
 0 & 0 & y & 1 & 0 & -x & 0 & 0 & y \\
 \hline
 y & 0 & 0 & y & 0 & 0 & -x & 1 & 0\\
 0 & y & 0 & 0 & y & 0 & 1 & -x & 0\\
 0 & 0 & y & 0 & 0 & y & 0 & 0 & a-x \\
\end{array}
 \right].
\end{equation}
The last matrix is conjugate, by a permutational matrix that places
the entries involving $c$, $b$ and $a$ in the last three positions
on the diagonal, to the matrix
\begin{equation}\label{e:delta-bar}
 \overline{\Delta}_n(x,y) =
 \left[
 \begin{array}{cccccc|ccc}
  -x &  0 & 0  & y  & 1  & 0 & y & 0 & 0 \\
  0  & -x & 1  & 0  & y  &  0 & 0 & y & 0 \\
  0  &  1 & -x & 0  & 0  & y & 0 & 0 & y \\
  y  &  0 & 0  & -x & 0  & 1 & y & 0 & 0 \\
  1  &  y & 0  & 0  & -x & 0 & 0 & y & 0 \\
  0  &  0 & y  & 1  & 0  & -x& 0 & 0 & y \\
  \hline
  y  &  0 & 0  & y  & 0 & 0 & c-x & 0 & 0 \\
  0  &  y & 0  & 0  & y & 0 & 0 & b-x & 0 \\
  0  &  0 & y  & 0  & 0 & y & 0 & 0 & a-x
 \end{array}
 \right].
\end{equation}
Thus, we have
\[
 \overline{\Delta}_n(x,y) =
  \begin{bmatrix}
   M_{11} & M_{12} \\ M_{21} & M_{22}
  \end{bmatrix},
\]
where the subdivision into blocks is indicated
in~\eqref{e:delta-bar}. We can easily calculate the determinant of
the matrix $M_{11}$ (since each block of size $3^{n-1} \times
3^{n-1}$ in $M_{11}$ is a scalar multiple of the identity matrix)
\[
 \det(M_{11}) = (x - 1 - y)^{3^{n-2}} (x + 1 + y)^{3^{n-2}} (x^2-1+y-y^2)^{2 \cdot
 3^{n-2}}
\]
and the Schur complement (\cite{zhang-schur} provides a historical
overview of Schur complement as well as wide ranging applications)
of $M_{11}$ in $\overline{\Delta}_n(x,y)$
\[ M_{22}-M_{21}M_{11}^{-1}M_{12} =
  \begin{bmatrix}
  c-x' & y' & y' \\
  y' & b-x' & y' \\
  y' & y' & a-x'
  \end{bmatrix}.
\]

Therefore
\begin{align*}
 D_n(x,y) &= \det(\Delta_n(x,y)) = \det(\overline{\Delta}_n(x,y))
 = \det(M_{11}) \det(M_{22} - M_{21}M_{11}^{-1}M_{12}) = \\
 &= (x^2 - (1+y)^2)^{3^{n-2}}
 (x^2-1+y-y^2)^{2 \cdot 3^{n-2}} D_{n-1}(x',y'). \qedhere
\end{align*}

\end{proof}

We can indicate now the reason behind the introduction of the new
parameter $y$. If the same calculation performed during the course
of the proof of Proposition~\ref{p:recursion} were applied directly
to the matrix $\Delta_n(x)$, the  corresponding Schur complement
would have been equal to
\[
\begin{bmatrix}
  c-x'' & y'' & y'' \\
  y'' & b-x'' & y'' \\
  y'' & y'' & a-x''
  \end{bmatrix},
\]
where
\[
 x'' = x+ \frac{2(-x^2+x+1)}{(x-2)(x^2-1)}
 \qquad\text{and}\qquad
 y'' = \frac{x}{(x-2)(x^2-1)}.
\]
In particular, the blocks off the main diagonal are not identity
anymore (as they were in $\Delta_n(x)$). The new parameter $y$ just
keeps track of this change.

The recursion for $D_n(x,y)$ already gives a good way to calculate
the characteristic polynomial of $\Delta_n$ for small values of $n$
(it is easier to iterate the recursion 9 times than to try to
directly calculate this polynomial for a $3^{10} \times 3^{10}$
matrix).

Further, for small values of $n$ we can easily plot the curves in
${\mathbb R}^2$ along which $D_n(x,y)=0$ and get an idea on the
structure of the auxiliary spectrum. For instance, for $n=1,2,3,4$
these curves are given in Figure~\ref{f:hyperbolae}. It is already
apparent from these graphs that with each new iteration several
hyperbolae are added to the auxiliary spectrum.
\begin{figure}[!ht]
\begin{tabular}{cc}
\includegraphics[width=150pt]{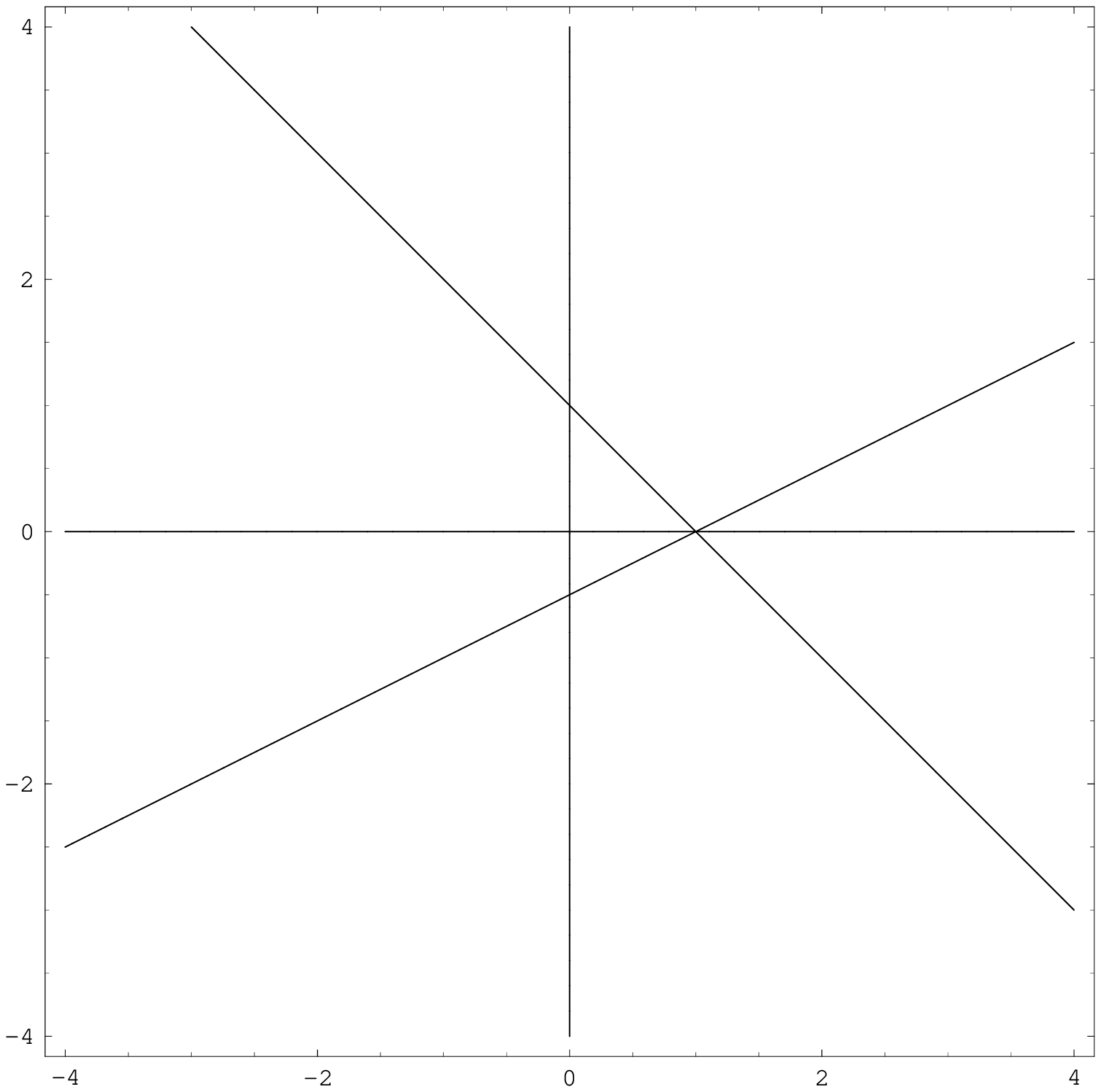} &
\includegraphics[width=150pt]{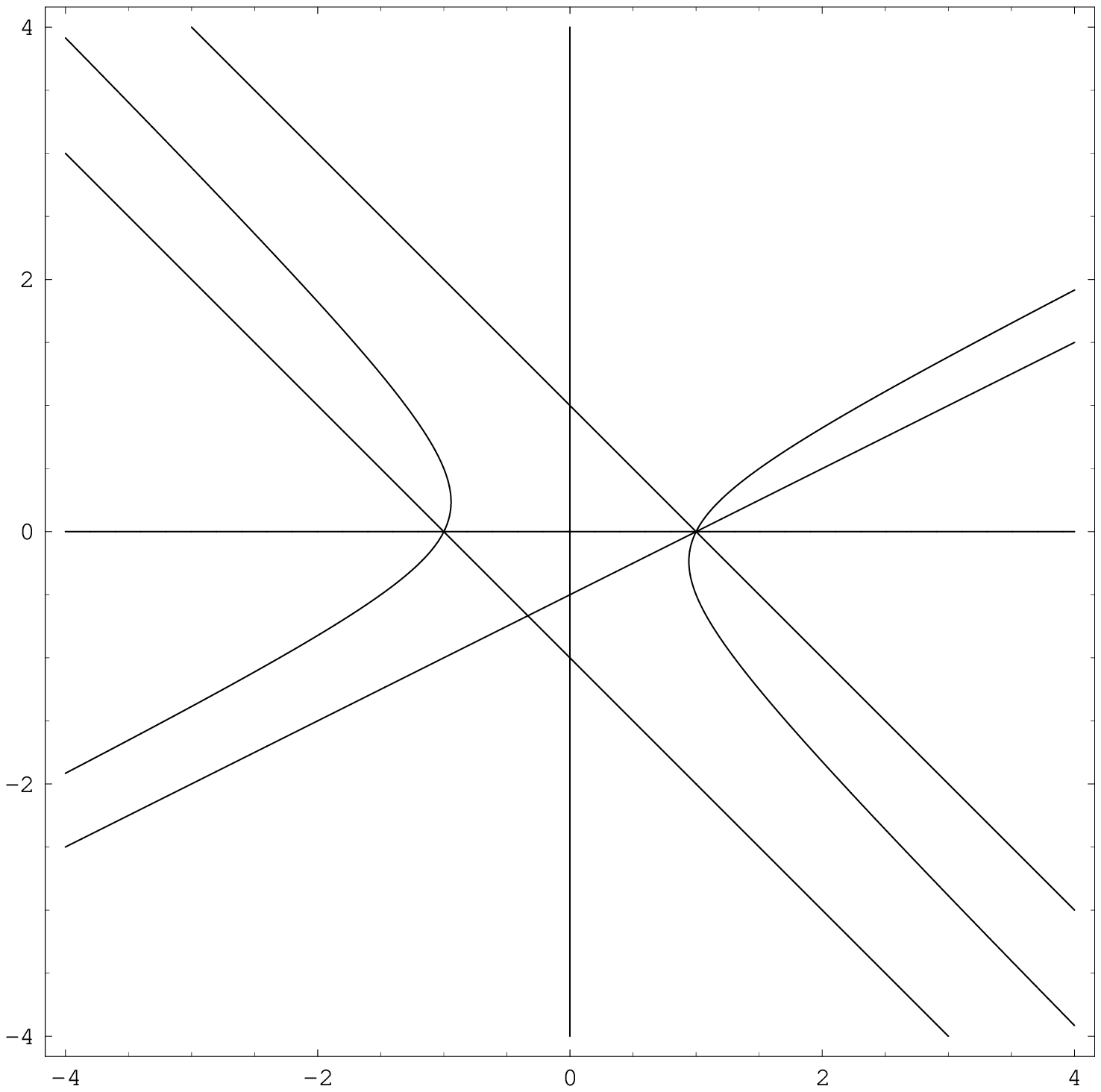} \\
\includegraphics[width=150pt]{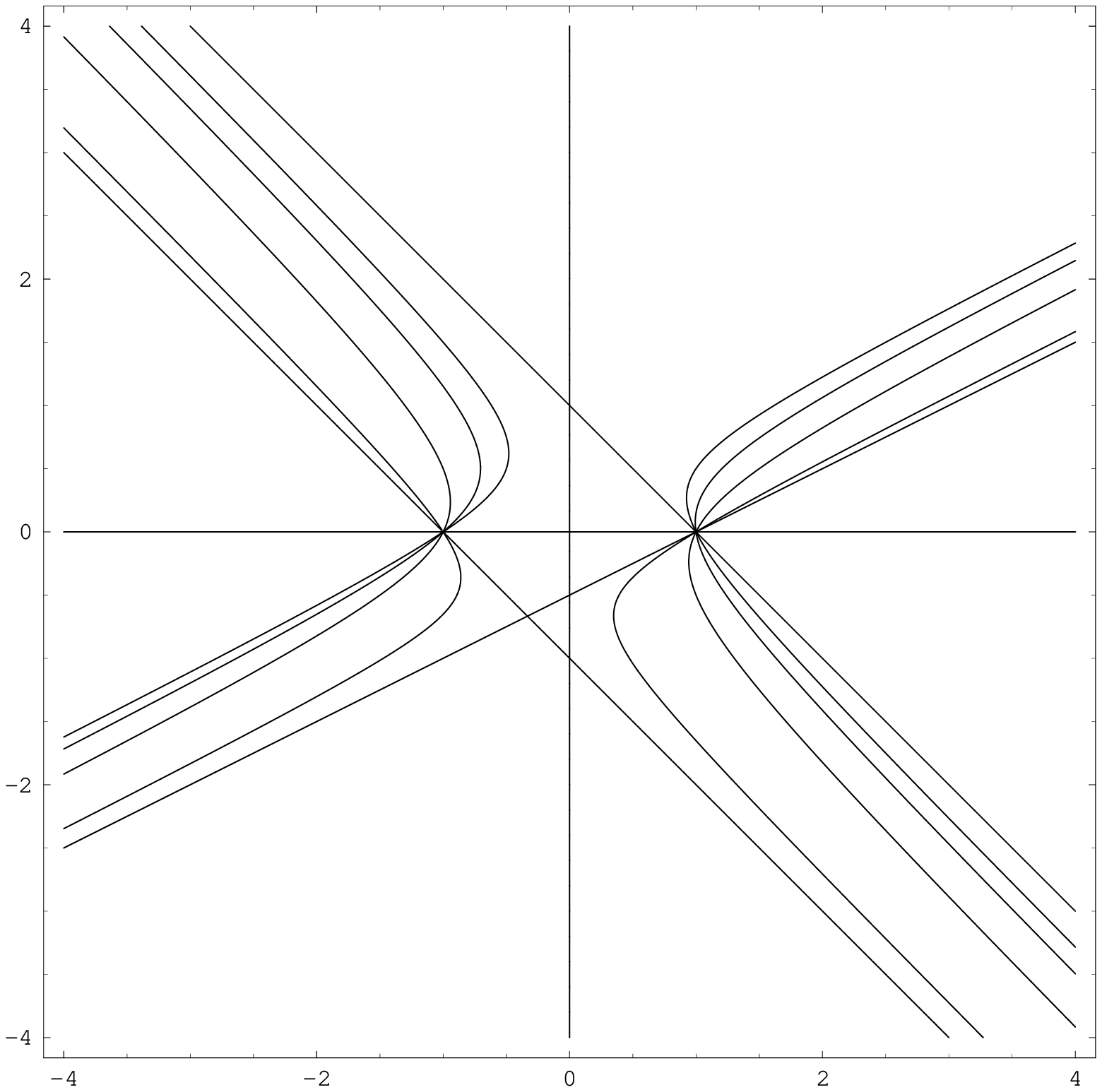} &
\includegraphics[width=150pt]{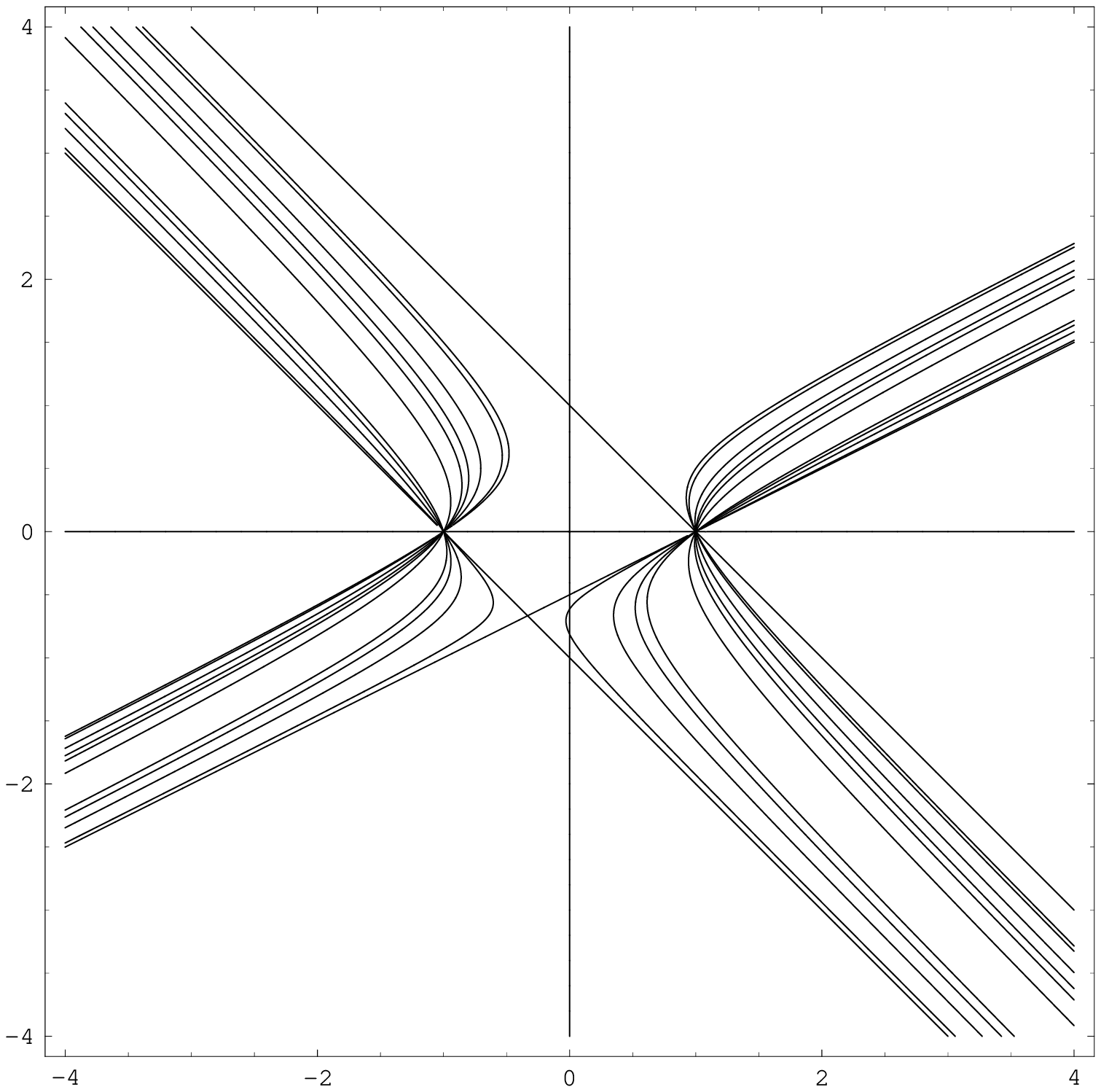}
\end{tabular}
\caption{Hyperbolae determining the auxiliary spectrum at level 1,
2, 3, and 4}\label{f:hyperbolae}
\end{figure}
The spectrum of $\Gamma_n$ is precisely the intersection of the line
$y=1$ and the auxiliary spectrum at level $n$. Note that other
examples in which the auxiliary spectrum is built from hyperbolae
already appear in~\cite{bartholdi-g:spectrum} (the first group of
intermediate growth, Gupta-Sidki group, Fabrykowski-Gupta group),
and there are examples where this is not the case (Basilica
group~\cite{grigorchuk-z:basilica2}, the iterated monodromy group of
the map $z \mapsto z^2+i$~\cite{grigorchuk-s-s:z2i}).

The next step is to provide precise description of the phenomenon we
just observed (the appearance of hyperbolae indicates that we may be
able to decompose $D_n(x,y)$ into factors of degree at most 2 over
${\mathbb R}$).

Define a transformation $\Psi: {\mathbb R}^2 \to {\mathbb R}$ by
\[
 \Psi(x,y) = \frac{x^2-1-xy-2y^2}{y}
\]
and a transformation $f: {\mathbb R} \to {\mathbb R}$ by
\[
 f(x) = x^2 - x - 3.
\]
First, we need some basic understanding of the dynamics of the
quadratic map $f(x)$. The critical point (the point where $f'(x)=0$)
of the map $f(x)$ is $1/2$. Therefore the critical value
$f(1/2)=-13/4$ is the unique value of $\theta$ for which
$f(x)=\theta$ has a double root. It is easy to check that
$f^{-1}[-2,3] = [-2,(1-\sqrt{5})/2] \cup [(1+\sqrt{5})/2,3]
\subseteq [-2,3]$. Since the critical value $-13/4 \not\in
f^{-1}[-2,3] \subseteq [-2,3]$ it follows that, for any value of
$\theta$ in $[-2,3]$, the entire backward orbit $f^{-i}(\theta)$ is
contained in $[-2,3]$ and the sets $f^{-i}(\theta)$,
$i=0,1,2,\dots$, consist of $2^i$ real numbers. Further, for such
$\theta$, the sets $f^{-i}(\theta)$ are mutually disjoint for
$i=0,1,\dots$, provided $\theta$ is not a periodic point (a point
$\zeta$ is periodic if $f^m(\zeta)=\zeta$ for some positive integer
$m$).

We come to a simple but crucial observation that lies behind all the
calculations that follow. It will eventually allow us to ``reduce
the dimension'' back to 1 and consider iterations of the
1-dimensional polynomial map $f(x)$ rather than the 2-dimensional
rational map $F(x,y)$.

\begin{lemma}
The 2-dimensional rational map $F$ is semi-conjugate to the
1-dimensional polynomial map $f$ through the map $\Psi$, i.e.,
\[ \xymatrix@C=8pt@R=8pt{
 & & {\mathbb R}^2 \ar[rr]^F \ar[dd]_\Psi && {\mathbb R}^2 \ar[dd]^\Psi
 \\
 \Psi(F(x,y)) = f(\Psi(x,y))
 \\
  & & {\mathbb R} \ar[rr]_f && {\mathbb R}
 }
\]
\end{lemma}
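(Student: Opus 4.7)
The plan is to verify this as a direct algebraic identity by clearing denominators on both sides and checking a resulting polynomial identity in $x$ and $y$. I would first introduce shorthand to control the bookkeeping: set
\[
 A = x-1-y, \qquad B = x-1+y, \qquad C = x^2 - 1 + y - y^2,
\]
so that the common denominator of the components of $F$ is $AC$, and the formulas become
\[
 x' = x + \frac{2y^2(x - x^2 + y^2)}{AC}, \qquad y' = \frac{y^2 B}{AC}.
\]
Observe the useful relations $AB = (x-1)^2 - y^2$ and $C - AB = 2 - 2x - y$, which will allow several cross-products to collapse.

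Next, I would compute each side of the desired identity after multiplying through by $y'$, so as to avoid dividing by $y'$. On the left, using the definition of $\Psi$,
\[
 y'\,\Psi(x',y') = (x')^2 - 1 - x'y' - 2(y')^2.
\]
Substituting the formulas for $x'$ and $y'$ and placing everything over the common denominator $(AC)^2$ yields a rational function whose numerator is an explicit polynomial $N_{\mathrm{L}}(x,y)$. On the right, using $y\Psi(x,y) = x^2 - 1 - xy - 2y^2$, I would write
\[
 y^2 f(\Psi(x,y)) = (x^2 - 1 - xy - 2y^2)^2 - y(x^2 - 1 - xy - 2y^2) - 3y^2,
\]
so that
\[
 y'\,f(\Psi(x,y)) = \frac{y^2 B}{AC}\cdot\frac{1}{y^2}\bigl[(x^2-1-xy-2y^2)^2 - y(x^2-1-xy-2y^2) - 3y^2\bigr],
\]
giving another rational expression; putting it over $(AC)^2$ yields a numerator polynomial $N_{\mathrm{R}}(x,y)$.

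The lemma then reduces to checking $N_{\mathrm{L}} \equiv N_{\mathrm{R}}$ as polynomials in $\mathbb{R}[x,y]$. I expect this to be the main obstacle: the intermediate polynomials are large (degree roughly eight in each variable), so the verification, although completely mechanical, demands careful organization. I would attack it by expanding in powers of $y$ and grouping coefficients, using the auxiliary identities $AB = (x-1)^2 - y^2$ and $C = AB + (2-2x-y)$ to recognize common factors, and if needed confirm the final equality by a short computer-algebra check. Once $N_{\mathrm{L}} = N_{\mathrm{R}}$ is established, dividing by $y'$ (which is nonzero generically) gives $\Psi(F(x,y)) = f(\Psi(x,y))$ as an identity of rational functions, and the semi-conjugacy follows.
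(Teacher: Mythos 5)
Your proposal is correct and matches the paper's proof, which consists of exactly this direct verification (the paper simply states ``The claim can be easily verified directly''); clearing denominators via $y'\,\Psi(x',y') = y'\,f(\Psi(x,y))$ and checking the resulting polynomial identity in $\mathbb{R}[x,y]$ is a sound way to organize that computation. One small slip: with your notation, $C - AB = 2x + y - 2$, not $2 - 2x - y$ (you computed $AB - C$), but this auxiliary relation does not affect the validity of the mechanical expansion.
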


\begin{proof}
The claim can be easily verified directly.
\end{proof}

Let
\begin{align*}
 \Psi_\theta(x,y) &= x^2 - 1 -xy -2y^2 -\theta y = y(\Psi(x,y) - \theta), \\
 L(x,y) &= x-1-y, \\
 K(x,y) &= x^2-1+y-y^2, \\
 A_1(x,y) &= x-1+y.
\end{align*}
In order to simplify notation we sometimes write $P$ and $P'$
instead of $P(x,y)$ and $P(F(x,y))$.

\begin{lemma}\label{l:psi'}
Let $\theta \in [-2,3]$ and let $\theta_0$ and $\theta_1$ be the two
distinct real roots of $f(x)=\theta$. Then
\[ \frac{A_1}{LK}\Psi_{\theta_0} \Psi_{\theta_1} = \Psi_\theta'. \]
\end{lemma}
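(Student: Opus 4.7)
The strategy is to leverage the semi-conjugacy $\Psi \circ F = f \circ \Psi$ established in the preceding lemma, together with the factorization of the quadratic $f(z) - \theta$ whose roots are by hypothesis $\theta_0$ and $\theta_1$. The useful conceptual observation is that the polynomials $\Psi_\theta$ differ from the rational functions $\Psi - \theta$ only by a factor of $y$, so any multiplicative identity holding for $\Psi - \theta$ will translate into one for $\Psi_\theta$ modulo powers of $y$.

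First I would unfold the definition of $\Psi_\theta'$. Using $\Psi_\theta(x,y) = y\bigl(\Psi(x,y) - \theta\bigr)$ at the point $F(x,y)$ and then invoking the semi-conjugacy gives
\[ \Psi_\theta'(x,y) = y'\bigl(\Psi(F(x,y)) - \theta\bigr) = y'\bigl(f(\Psi(x,y)) - \theta\bigr). \]
Next I would factor $f(z) - \theta = (z - \theta_0)(z - \theta_1)$, valid because $\theta_0, \theta_1$ are the two roots of this quadratic. Substituting $z = \Psi(x,y)$ and returning to the polynomial quantities via $\Psi(x,y) - \theta_i = \Psi_{\theta_i}(x,y)/y$ yields
\[ \Psi_\theta'(x,y) = \frac{y'}{y^{2}}\,\Psi_{\theta_0}(x,y)\,\Psi_{\theta_1}(x,y). \]

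It then remains to verify that $y'/y^{2} = A_1/(LK)$. From the explicit formula
\[ y' = \frac{y^{2}(x-1+y)}{(x-1-y)(x^{2}-1+y-y^{2})} \]
supplied in Proposition~\ref{p:recursion}, the numerator is exactly $y^2 A_1$ and the denominator is exactly $LK$, so the ratio collapses at once to $A_1/(LK)$, completing the identity.

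I do not foresee any genuine obstacle: all the conceptual content has been packaged into the semi-conjugacy lemma, and what remains is a two-line substitution followed by a polynomial factorization of $f(z)-\theta$. The only thing one needs to be careful about is keeping track of the single factor of $y$ that distinguishes $\Psi_\theta$ from $\Psi - \theta$, which is precisely what produces the $A_1/(LK) = y'/y^{2}$ prefactor rather than, say, $y'$ alone.
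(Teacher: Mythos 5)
Your proposal is correct and follows essentially the same route as the paper: both unfold $\Psi_\theta' = y'(\Psi'-\theta)$, apply the semi-conjugacy to write $\Psi' = f(\Psi)$, factor $f(z)-\theta = (z-\theta_0)(z-\theta_1)$, and absorb the two factors of $y$ from $\Psi_{\theta_0}\Psi_{\theta_1}$ into the explicit formula $y' = y^2 A_1/(LK)$. The only difference is cosmetic ordering of when the formula for $y'$ is substituted.
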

\begin{proof}
We have
\begin{align*}
 \Psi_\theta' &= y'(\Psi' - \theta) = \frac{y^2 A_1}{LK}(f(\Psi) - \theta)
 = \frac{y^2 A_1}{LK}(\Psi - \theta_0)(\Psi - \theta_1) = \frac{A_1}{LK}\Psi_{\theta_0} \Psi_{\theta_1}.
 \qedhere
\end{align*}
\end{proof}

Define the polynomial
\[
 D_0(x,y) = -(x-1-2y),
\]
two families of polynomials
\[
 A_n(x,y) =
 \begin{cases}
 x-1+y, & n=1 \\
 \prod_{\theta \in f^{-(n-2)}(0)} \Psi_\theta, & n \geq 2
 \end{cases}
\]
and
\[
 B_n(x,y) =
 \begin{cases}
 x+1+y, & n=2 \\
 \prod_{\theta \in f^{-(n-3)}(-2)} \Psi_\theta, & n \geq 3
 \end{cases},
\]
and two integer sequences
\[ a_n = \frac{3^{n-1}+3}{2}, \qquad\qquad b_n = \frac{3^{n-1}-1}{2}, \]
for $n \geq 1$.

Note that each factor $\Psi_\theta$ that appears above in $A_n$ and
$B_n$ is quadratic polynomial in ${\mathbb R}[x,y]$. It will be
shown that the hyperbolae $\Psi_\theta=0$ are precisely the
hyperbolae in the auxiliary spectrum.

\begin{lemma}
\begin{align*}
 D_0' &= \frac{D_0}{L}A_1, \\
 A_1' &= \frac{A_1}{K}A_2, \\
 A_n' &= \left(\frac{A_1}{LK}\right)^{2^{n-2}} A_{n+1}, \text{ for } n \geq 2, \\
 B_2' &= \frac{B_2}{K}B_3, \\
 B_n' &= \left(\frac{A_1}{LK}\right)^{2^{n-3}} B_{n+1}, \text{ for } n \geq 3.
\end{align*}
\end{lemma}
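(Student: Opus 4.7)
The plan is to split the five identities into two groups: the two \emph{inductive} identities (for $A_n$ with $n\geq 2$ and $B_n$ with $n\geq 3$) follow mechanically from the preceding Lemma~\ref{l:psi'}, while the three \emph{base} identities (for $D_0'$, $A_1'$, $B_2'$) require direct computation using the explicit rational map $F$ from Proposition~\ref{p:recursion}.

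For the inductive identities, I would unwind the product definition of $A_n$ and apply Lemma~\ref{l:psi'} factor by factor. Since $0\in[-2,3]$ avoids the critical value $-13/4$ of $f$, each $\theta\in f^{-(n-2)}(0)$ has two distinct $f$-preimages $\theta_0,\theta_1$, so
\[
A_n' \;=\; \prod_{\theta\in f^{-(n-2)}(0)} \Psi_\theta' \;=\; \prod_{\theta} \frac{A_1}{LK}\,\Psi_{\theta_0}\Psi_{\theta_1} \;=\; \left(\frac{A_1}{LK}\right)^{2^{n-2}} A_{n+1},
\]
the last equality holding because $|f^{-(n-2)}(0)|=2^{n-2}$ and the union of the pairs $\{\theta_0,\theta_1\}$ precisely reconstitutes $f^{-(n-1)}(0)$. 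The corresponding identity for $B_n'$, $n\geq 3$, follows verbatim with $0$ replaced by $-2$.

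For the base cases I would handle $D_0'$ first, where the computation is especially clean: using the explicit formulas for $x',y'$ together with the identity $-x^2+x+y^2-A_1 = -K$,
\[
x'-1-2y' \;=\; (x-1) + \frac{-2y^2 K}{LK} \;=\; \frac{(x-1)L - 2y^2}{L} \;=\; \frac{(x-1-2y)(x-1+y)}{L},
\]
giving $D_0' = (D_0/L)A_1$. For $B_2'$, I would exploit the factorization $\Psi_3 = (x-1-2y)(x+1+y) = -D_0\, B_2$: since $3$ is a fixed point of $f$ with $f^{-1}(3)=\{3,-2\}$, applying Lemma~\ref{l:psi'} at $\theta=3$ yields $(-D_0')(B_2') = \Psi_3' = (A_1/LK)\,\Psi_3\,\Psi_{-2} = (A_1 B_3/LK)\,(-D_0)\,B_2$, and dividing by the already established $D_0'$ produces $B_2' = (B_2/K)B_3$. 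The remaining identity $A_1' = (A_1/K)A_2$ does not benefit from such a trick, since $A_1 = x-1+y$ is not of the form $\Psi_\theta$; after clearing denominators by $LK$ it reduces to the polynomial equality
\[
(x-1)LK + y^2(-2x^2+3x+2y^2+y-1) \;=\; \bigl((x-1)^2-y^2\bigr)\bigl(x^2-1-xy-2y^2\bigr),
\]
which I would confirm by a short expansion.

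The main obstacle is nothing conceptual but rather the brute-force verification of $A_1'$; every other identity is either an immediate consequence of Lemma~\ref{l:psi'} or reducible to a neat cancellation. The appearance of $D_0$, $A_1$, $B_2$ as the seeds of the two product families reflects the special dynamics at the fixed point $\theta=3$ of $f$, which is also the reason the factorization $\Psi_3 = -D_0 B_2$ materialises and lets $B_2'$ be derived from $D_0'$ with essentially no new work.
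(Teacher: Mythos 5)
Your proof is correct and follows the same route as the paper: the identities for $A_n'$ ($n\geq 2$) and $B_n'$ ($n\geq 3$) are obtained exactly as in the paper by applying Lemma~\ref{l:psi'} factorwise (with the distinctness of preimages guaranteed because the backward orbits of $0$ and $-2$ stay in $[-2,3]$, which misses the critical value $-13/4$), while the three remaining identities are verified directly, which is all the paper does as well. Your computations for $D_0'$ and $A_1'$ check out, and the derivation of $B_2'$ from $D_0'$ via $\Psi_3=-D_0B_2$ is a pleasant shortcut for a verification the paper leaves to the reader.
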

\begin{proof}
From Lemma~\ref{l:psi'} we obtain, for $n \geq 2$,
\[ A_n' = \prod_{\theta \in f^{-(n-2)}(0)} \Psi_\theta' =
\left(\frac{A_1}{LK}\right)^{2^{n-2}}\prod_{\theta \in
f^{-(n-1)}(0)} \Psi_\theta = \left(\frac{A_1}{LK}\right)^{2^{n-2}}
A_{n+1}. \] The claim involving $B_n'$ can be verified in a similar
manner. All the other claims do not involve $n$ and can be easily
verified directly.
\end{proof}

At this moment we can provide a factorization of the determinant
$D_n(x,y)$.

\begin{proposition}\label{p:factorization}
\begin{align}
 D_1(x,y) &= D_0 A_1^{a_1}, \notag \\
 D_n(x,y) &= D_0 A_1^{a_n}A_2^{a_{n-1}} \cdots A_n^{a_1} B_2^{b_n} B_3^{b_{n-1}} \cdots B_n^{b_2}, \text{ for }n \geq 2
 . \label{e:factorization}
\end{align}
\end{proposition}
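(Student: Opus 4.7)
The plan is to proceed by induction on $n$. For the base cases, $n=1$ follows immediately from Proposition~\ref{p:recursion} since $D_1 = -(x-1-2y)(x-1+y)^2 = D_0 A_1^{2} = D_0 A_1^{a_1}$. For $n=2$, one applies Proposition~\ref{p:recursion} to get $D_2 = (LB_2) K^2 D_1'$, uses the transformation rules for $D_0'$ and $A_1'$ from the preceding lemma, and checks directly that $D_2 = D_0 A_1^{3} A_2^{2} B_2 = D_0 A_1^{a_2} A_2^{a_1} B_2^{b_2}$.

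For the inductive step with $n \geq 3$, assume the factorization for $D_{n-1}$. Note that $x^2 - (1+y)^2 = L \cdot B_2$, so Proposition~\ref{p:recursion} yields
\[
 D_n = L^{3^{n-2}} B_2^{3^{n-2}} K^{2\cdot 3^{n-2}} \cdot D_{n-1}(F(x,y)).
\]
Expand $D_{n-1}(F(x,y))$ by replacing each factor in the inductive formula by its image under $F$, using the transformation rules $D_0' = (D_0/L) A_1$, $A_1' = (A_1/K) A_2$, $A_k' = (A_1/LK)^{2^{k-2}} A_{k+1}$ for $k \geq 2$, $B_2' = (B_2/K) B_3$, and $B_k' = (A_1/LK)^{2^{k-3}} B_{k+1}$ for $k \geq 3$. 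Collect the exponents appearing on $L$, $K$, $A_j$, $B_j$.

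The whole verification reduces to establishing the combinatorial identity
\[
 \sum_{j=1}^{n-2} 2^{n-j-2} a_j + \sum_{j=2}^{n-2} 2^{n-j-2} b_j = 3^{n-2} - 1,
\]
which I would check using $a_j + b_j = 3^{j-1}+1$ for $j \geq 2$ and evaluating the resulting geometric sums. Granted this identity, the total power of $L$ coming from $D_{n-1}'$ is $-1 - \sum_{k=2}^{n-1} 2^{k-2} a_{n-k} - \sum_{k=3}^{n-1} 2^{k-3} b_{n-k+1}$, which after re-indexing equals $-3^{n-2}$, cancelling the prefactor $L^{3^{n-2}}$; similarly the $K$-exponent contribution from $D_{n-1}'$ is $-a_{n-1} - b_{n-1} - (3^{n-2}-1) = -(3^{n-2}+1) - (3^{n-2}-1) = -2 \cdot 3^{n-2}$, cancelling $K^{2\cdot 3^{n-2}}$. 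The exponent of $A_1$ accumulates to $1 + a_{n-1} + (3^{n-2}-1) = a_n$, and the exponent of $B_2$ to $3^{n-2} + b_{n-1} = b_n$. For each remaining $A_k$ with $2 \leq k \leq n$ (respectively $B_k$ with $3 \leq k \leq n$), the factor $A_k$ arises only from $A_{k-1}'$ (respectively $B_k$ only from $B_{k-1}'$), which carries its exponent through unchanged to match the asserted $a_{n-k+1}$ (respectively $b_{n-k+2}$).

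The main obstacle is purely bookkeeping: there is no conceptual difficulty beyond the single arithmetic identity above, but the many shifted indices and the need to separate contributions among $L$, $K$, $A_1$, and the generic $A_k$, $B_k$ demand patient organization. Concentrating the cancellation into that one identity and handling $A_1$ and $B_2$ as special cases (since they receive contributions from every lower-level factor) is what keeps the verification tractable.
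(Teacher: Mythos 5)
Your proposal is correct and follows essentially the same route as the paper: induction via Proposition~\ref{p:recursion} and the transformation lemma, with all the cancellation concentrated in the identity $\sum_{j=1}^{n-2}2^{n-j-2}a_j+\sum_{j=2}^{n-2}2^{n-j-2}b_j=3^{n-2}-1$, which is exactly the paper's relation $m_n+1=3^{n-2}$ (the paper's remaining three bookkeeping identities are what you verify directly from the closed forms of $a_n$ and $b_n$). The only cosmetic difference is that you treat $n=2$ as a separate base case, whereas the paper absorbs it into the general step using empty sums and $b_1=0$.
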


\begin{proof}
The claim is correct for $n=1$. Assume the claim is correct for
$n-1$ (where $n \geq 2$). Then, by Proposition~\ref{p:recursion},
\begin{small}
\begin{align*}
 D_n(x,y) &= (B_2 L)^{3^{n-2}} K^{2 \cdot 3^{n-2}} D_{n-1}(F(x,y)) = \\
 &= (B_2 L)^{3^{n-2}} K^{2 \cdot 3^{n-2}} D_0' (A_1')^{a_{n-1}} \cdots (A_{n-1}')^{a_1} (B_2')^{b_{n-1}} \cdots (B_{n-1}'))^{b_2} = \\
 &= (B_2 L)^{3^{n-2}} K^{2 \cdot 3^{n-2}} \frac{D_0A_1}{L} \left(\frac{A_1}{LK}\right)^{m_n} \frac{A_1^{a_{n-1}}}{K^{a_{n-1}}}A_2^{a_{n-1}} \cdots A_n^{a_1} \frac{B_2^{b_{n-1}}}{K^{b_{n-1}}}B_3^{b_{n-1}} \cdots B_n^{b_2},
\end{align*}
\end{small}
where $m_n = (a_{n-2}+2a_{n-3} + \cdots+2^{n-3}a_1) +
(b_{n-2}+2b_{n-3}+\cdots+2^{n-4}b_2)$. It is easy to verify that
\begin{alignat*}{2}
 3^{n-2} &= m_n + 1,  &&a_n = m_n + a_{n-1} + 1 ,\\
 2\cdot 3^{n-2} &= m_n + a_{n-1} + b_{n-1} , \qquad\qquad &&b_n = b_{n-1}+3^{n-2} ,
\end{alignat*}
and therefore $D_n(x,y) = D_0A_1^{a_n}A_2^{a_{n-1}} \cdots A_n^{a_1}
B_2^{b_n}B_3^{b_{n-1}} \cdots B_n^{b_2}$.
\end{proof}

\begin{proof}[Proof of Theorem~\ref{t:main}]
A factorization of $D_n(x,y)$ is already provided in
Proposition~\ref{p:factorization}. The eigenvalues of $\Gamma_n$
correspond to the zeros of the polynomial $D_n(x,1)$. The claims in
Theorem~\ref{t:main} concerning the eigenvalues of $\Gamma_n$ follow
immediately once it is observed that
\begin{align*}
 \Psi(x,1) &= f(x), \\
 \Psi_\theta(x,1) &= f(x) -\theta, \\
 D_0(x,1) &= -(x-3), \\
 A_1(x,1) &= x, \\
 A_n (x,1)&= \prod_{\theta \in f^{-(n-2)}(0)} (f(x) - \theta), \ n \geq 2\\
 B_2(x,1) &= x+2, \\
 B_n(x,1) &= \prod_{\theta \in f^{-(n-3)}(-2)} (f(x) - \theta), \ n \geq
 3.
\end{align*}
Further, the forward orbit of 0 under $f$ escapes to $\infty$. Thus
$0$ is not periodic point and this implies that the sets $f^{-i}(0)$
are mutually disjoint for $i=0,1,2,\dots$. Similarly, since
$f(-2)=3$ and 3 is fixed point of $f$, the point $-2$ is not
periodic and the sets $f^{-i}(-2)$ are mutually disjoint for
$i=0,1,2,\dots$~. Thus the number of distinct eigenvalues of
$\Gamma_n$, for $n \geq 1$, is $1+(2^{n}-1)+(2^{n-1}-1) =3\cdot
2^{n-1} -1$.

Since $H$ is amenable (or more obviously, since the graph $\Gamma$
is amenable), the spectrum of $\Gamma$ is given by
(see~\cite{bartholdi-g:spectrum} for details)
\[
 \overline{\{3\} \cup \bigcup_{i=0}^\infty f^{-i}(0) \cup \bigcup_{i=0}^\infty f^{-i}(-2)}  .
\]
Recall that a periodic point $\zeta$ of $f$ is repelling if $\lvert
f'(\zeta) \rvert >1$ (see~\cite{devaney:b-graduate-chaotic}
or~\cite{beardon:b-iteration}for basic notions and results on
iteration of rational functions). Since $f(3)=3$ and $f'(3)=5$, the
point $3$ is a repelling fixed point for the polynomial $f$. This
implies that the backward orbit of $3$, which is equal to $\{3\}
\cup \bigcup_{i=0}^\infty f^{-i}(-2)$, is in the Julia set $J$ of
$f$ (see~\cite[Theorem~6.4.1]{beardon:b-iteration}). On the other
hand $0$ is not in the Julia set (its forward orbit escapes to
$\infty$) and therefore the set $I=\bigcup_{i=0}^\infty f^{-i}(0)$
is a countable set of isolated points that accumulates to the Julia
set $J$ of $f$. The spectrum of $\Gamma$, as a set, is therefore
equal to
\[
 \overline{\bigcup_{i=0}^\infty f^{-i}(0)}.
\]
The Julia set $J$ has the structure of a Cantor set since $f$ is
conjugate to the map $x \mapsto x^2-15/4$ and $-15/4<-2$
(see~\cite[Section~3.2]{devaney:b-graduate-chaotic}).

Recall that (see \cite{bartholdi-g:spectrum,grigorchuk-z:ihara}) the
KNS spectral measure $\nu$ of is limit of the counting measures
$\nu_n$ defined for $\Gamma_n$ ($\nu_n(B)=m_n(B)/3^n$, where
$m_n(B)$ counts, including the multiplicities, the eigenvalues of
$\Gamma_n$ in $B$). For the eigenvalues in $f^{-i}(0)$ we have
\[
 \lim_{n \to \infty} \frac{a_{n-i}}{3^n} = \lim_{n \to \infty} \frac{3^{n-i-1}+3}{2 \cdot 3^n} = \frac{1}{6 \cdot 3^i}.
\]
Since $b_n = a_n-2$, for $n \geq 1$, the density of the eigenvalues
in $f^{-i}(-2)$ is also $1/(6 \cdot 3^i)$. Since all these densities
add up to 1, the KNS spectral measure is discrete and concentrated
at these eigenvalues.
\end{proof}

\begin{corollary}
The characteristic polynomial $P_n(x)$ of the matrix $\Delta_n$
decomposes into irreducible factors over ${\mathbb Q}[x]$ as
follows:
\begin{alignat*}{7}
 P_0(x) &= -(x-3) \\
 P_1(x) &= -(x-3) x^2  \\
 P_2(x) &= -(x-3) x^3      &&(f(x))^2  &&             && &&(x+2) \\
 P_3(x) &= -(x-3) x^6      &&(f(x))^3  &&(f^2(x))^2   && &&(x+2)^4 \ &&g(x+2) \\
 P_4(x) &= -(x-3) x^{15}   &&(f(x))^6  &&(f^2(x))^3  &&(f^3(x))^2 &&
               (x+2)^{13}  &&(g(x+2))^4  &&g^2(x+2) \\
 \dots
\end{alignat*}
and in general
\begin{small}
\[
 P_n(x) = -(x-3)\ x^{a_n} (f(x))^{a_{n-1}} \cdots (f^{n-1}(x))^{a_1} \
 (x+2)^{b_n} (g(x+2))^{b_{n-1}} \cdots g^{n-2}(x+2)^{b_2},
\]
\end{small}
where $g(x) = x^2 - 5x + 5$.
\end{corollary}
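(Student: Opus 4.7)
The plan is to derive the displayed factorization by specializing $y=1$ in Proposition~\ref{p:factorization} and identifying the resulting one-variable factors with iterates of $f$ and of $g$. With the convention $P_n(x)=\det(\Delta_n-xI)=D_n(x,1)$ (consistent with $P_0(x)=D_0(x,1)=-(x-3)$ and with the leading coefficient $(-1)^{3^n}=-1$ appearing in every line of the display), the factorization \eqref{e:factorization} yields immediately
\[
 P_n(x) = -(x-3)\cdot A_1(x,1)^{a_n}A_2(x,1)^{a_{n-1}}\cdots A_n(x,1)^{a_1}\cdot B_2(x,1)^{b_n}B_3(x,1)^{b_{n-1}}\cdots B_n(x,1)^{b_2}.
\]
Direct substitution gives $A_1(x,1)=x$ and $B_2(x,1)=x+2$, while for $k\geq 2$,
\[
 A_k(x,1) = \prod_{\theta\in f^{-(k-2)}(0)}\bigl(f(x)-\theta\bigr),\qquad B_k(x,1) = \prod_{\theta\in f^{-(k-3)}(-2)}\bigl(f(x)-\theta\bigr).
\]
The bulk of the argument is to rewrite these products as iterates.

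For the $A$-factors I would note that $A_k(x,1)$ is monic of degree $2^{k-1}$ and its zero set consists of those $x$ with $f^{k-1}(x)=0$; since $0$ is not periodic under $f$ (its forward orbit escapes to $\infty$, as noted in the proof of Theorem~\ref{t:main}), these $2^{k-1}$ roots are simple and comprise the full root set of the monic polynomial $f^{k-1}(x)$, so $A_k(x,1)=f^{k-1}(x)$. For the $B$-factors the key point is the semi-conjugacy $g\circ h=h\circ f$ with $h(x)=x+2$, verified directly from $g(x+2)=x^2-x-1=f(x)+2$. Iterating gives $g^{k-2}(x+2)=f^{k-2}(x)+2$, a monic polynomial of degree $2^{k-2}$ whose zero set is $f^{-(k-2)}(-2)$; since $B_k(x,1)$ is monic of the same degree with the same roots, the two polynomials coincide. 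Plugging these identifications into the displayed product produces the corollary's formula.

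The step I expect to be the main obstacle is showing that each factor is in fact irreducible over $\mathbb{Q}[x]$, which the factorization argument alone does not supply. A natural plan is induction on the iterate: $f$ is irreducible over $\mathbb{Q}$ (discriminant $13$), and to pass from $f^{k-1}$ to $f^k$ one must verify that the quadratic extension generated by a root of $f^k$ over the splitting field of $f^{k-1}$ is nontrivial, i.e., that the discriminant $13+4\alpha$ (with $\alpha$ running over roots of $f^{k-1}$) is a non-square in the previously built field. The base-level calculation, where $(13+4r_1)(13+4r_2)=173$ is square-free, gives a prototype of the mechanism; packaging this into a general inductive step in the tower of nested radical extensions, and then transferring the result to $g^j(x+2)$ via the conjugacy with $h$, should close out the corollary.
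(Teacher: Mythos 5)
Your identification of the factors is correct and follows the paper's own route: specialize Proposition~\ref{p:factorization} at $y=1$, observe that $A_k(x,1)=\prod_{\theta\in f^{-(k-2)}(0)}(f(x)-\theta)=f^{k-2}(f(x))=f^{k-1}(x)$ (using that $f^{-(k-2)}(0)$ has the full complement of $2^{k-2}$ elements), and transport the $B$-factors through the conjugation $g(x+2)=f(x)+2$, exactly as the paper does. One small slip: the simplicity of the roots of $f^{k-1}$ does not follow from $0$ being non-periodic under $f$; it follows from the fact, established earlier in the paper, that the critical value $-13/4$ never lands in $f^{-i}(0)\subseteq[-2,3]$. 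The conclusion you need is available either way.

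The genuine gap is the irreducibility of the factors $f^k(x)$ and $g^k(x+2)$ over $\mathbb{Q}$, which you explicitly leave as a plan rather than a proof. The correct inductive target is that for a root $\alpha$ of the (inductively irreducible) $f^{k-1}$, the discriminant $13+4\alpha$ of $f(x)-\alpha$ is a non-square in $\mathbb{Q}(\alpha)$; the splitting field of $f^{k-1}$, which you name, is the wrong field to work in (non-squareness there is stronger than needed and could conceivably fail even when the corollary holds). Your norm computation $(13+4r_1)(13+4r_2)=173$ settles only $k=2$; in general $\prod_{\alpha}(13+4\alpha)=4^{2^{k-1}}f^{k-1}(-13/4)$, and showing that this is a non-square for every $k$ is precisely the nontrivial content you have not supplied. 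The paper does not prove this from scratch: it invokes \cite[Theorem 2]{ayad-m:quadratic}, which guarantees that all iterates of an irreducible monic quadratic with odd discriminant are irreducible (here $\operatorname{disc} f=13$ and $\operatorname{disc} g=5$). Unless you either carry out the missing induction (say, by a $2$-adic or congruence argument controlling $f^{k}(-13/4)$) or cite such a result, the claim that the displayed factors are irreducible over $\mathbb{Q}[x]$ remains unproved.
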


\begin{proof}
Observe that $g(x+2) = f(x)+2$ (i.e., $f$ and $g$ are conjugate by
translation by 2). This implies that, for $n \geq 1$, $g^n(x+2) =
f^n(x)+2$. The correctness of the factorization directly follows
from Proposition~\ref{p:factorization} (for $y=1$).

Since $f(x)$ and $g(x)$ are irreducible quadratic polynomials and
their discriminants are odd, it follows that, for all $n \geq 1$,
$f^n(x)$ and $g^n(x)$ (and therefore also $g^n(x+2)$) are
irreducible (see~\cite[Theorem 2]{ayad-m:quadratic}).
\end{proof}

Before we conclude the section, it is perhaps necessary to make the
following technical remark. During the course of the proof of
Proposition~\ref{p:recursion} we used the formula $D_n =
\det(M_{11}) \det(M_{22}-M_{21}M_{11}^{-1}M_{12})$ to establish a
recursion for $D_n$. However, this formula may be used only when
$M_{11}$ is invertible (thus, we must stay off the lines $x-1-y=0$,
$x+1+y=0$ and the hyperbola $x^2-1+y-y^2$). The problem is
compounded by the fact that the formula for $D_n(x,y)$ needs to be
iterated, which then means that it may not be used at points on the
pre-images of the above curves under the 2-dimensional map $F$.
Nevertheless, the factorization formulas~\eqref{e:factorization}
provided in Proposition~\ref{p:factorization} are correct on some
open set in the plane (avoiding these pre-images) and since these
formulas represents equalities of polynomials in two variables they
are correct on the whole plane.

\section{Some additional comments}

We used the spectra of the level Schreier graphs $\Gamma_n$ to
calculate the spectrum of the boundary Schreier graph
$\Gamma_{0^\infty}$ and we called this spectrum the Schreier
spectrum of $H$. Since the boundary $\partial X^*$ is uncountable
and $H$ is countable, there are uncountably many orbits of the
action of $H$ on the boundary and there are clearly non-isomorphic
ones among them (for instance $\Gamma_{0^\infty}$ has exactly one
loop, while $\Gamma_{(012)^\infty}$ has none). However, the spectra
of all these boundary Schreier graphs coincide and are equal to the
closure of the union of the spectra of $\Gamma_n$. This follows
from~\cite[Proposition~3.4]{bartholdi-g:spectrum}, since $H$ acts
transitively on each level of the tree $X^*$ and is amenable (the
amenability of groups generated by bounded automata is proven
in~\cite{bartholdi-k-n-v:bounded}).

The boundary $\partial X^*$ supports a canonical invariant measure
$\mu$ defined as the Bernoulli product measure on $\partial X^*$
induced by the uniform measure on the finite set $X$. Thus we can
associate to any group $G$ of tree automorphisms a unitary
representation $\rho$ of $G$ on $L_2(\partial X^*,\mu)$, defined by
$(\pi(g)(\alpha))(w) = \alpha(g^{-1}(w))$, for $g \in G$, $\alpha
\in L_2(\partial X^*,\mu)$ and $w \in
\partial X^*$. If $S=S^{-1}$ is a finite symmetric set of generators of $G$,
we associate to the representation $\pi$ a Hecke type operator
defined by $T_S = \frac{1}{\lvert S \rvert} \sum_{s \in S} \pi(s)$.
It follows from~\cite[Theorem~3.6]{bartholdi-g:spectrum} and the
amenability of $H$ that, when $G=H$ and $S=\{a,b,c\}$, the spectrum
of $T_S$ is equal to the Schreier spectrum of $H$ re-scaled by
$1/3$.

Speaking of amenability, the question of amenability of $H^{(k)}$ is
open for $k \geq 4$. The fact that many questions, including
diameter, spectra, average distance (see~\cite{hinz-s:average}) can
be answered in case $k=3$, but are still open in case $k \geq 4$ may
very well be related to the possibility that the groups $H^{(k)}$
are not amenable for $k \geq 4$, even though the graphs
$\Gamma^{(k)}$ are amenable (an obvious sequence $\{F_n\}$ of
F{\o}lner sets  is obtained by declaring $F_n$ to consist of all
right infinite words in which all symbols after position $n$ are
equal to 0).

Thus, both the Hanoi Towers groups $H^{(k)}$, $k \geq 4$, and the
associated Schreier graphs are very interesting objects and, in our
opinion, more work on questions related to algebraic and
combinatorial properties of the groups and analysis and random walks
on these groups and graphs is certainly needed.

\section*{Acknowledgment}

We would like to thank Isaac Newton Institute, Cambridge UK, and
their staff for the hospitality and support during the Program
``Analysis on Graphs and its Applications'' (Spring 2007). We would
also like to thank the Organizers B.~M.~Brown, P.~Exner,
P.~Kuchment, and T.~Sunada for giving us the opportunity to
participate in the Program.


\def\cprime{$'$}
\providecommand{\bysame}{\leavevmode\hbox
to3em{\hrulefill}\thinspace}
\providecommand{\MR}{\relax\ifhmode\unskip\space\fi MR }
\providecommand{\MRhref}[2]{%
  \href{http://www.ams.org/mathscinet-getitem?mr=#1}{#2}
} \providecommand{\href}[2]{#2}

\end{document}